\documentclass[a4paper,12pt]{article}
\usepackage{amssymb,amsmath,amsthm,latexsym}
\usepackage{amsfonts}
	\usepackage{amsfonts}
\usepackage{graphicx}
\usepackage[pdftex,bookmarks,colorlinks=false]{hyperref}
\usepackage{verbatim}
\usepackage{caption}
\usepackage{subcaption}


\usepackage{fancyhdr}
\pagestyle{fancy}
\lhead{\tiny A Study on Semi-arithmetic Integer Additive Set-Indexers of Graphs}
\chead{}
\rhead{\tiny N K Sudev \& K A Germina}
\lfoot{}
\cfoot{\thepage}
\rfoot{}

\newtheorem{theorem}{Theorem}[section]

\newtheorem{corollary}[theorem] {Corollary}
\newtheorem{definition}[theorem]{Definition}

\newtheorem{lemma} [theorem]{Lemma}

\newtheorem{proposition}[theorem]{Proposition}

\setlength{\parskip}{2.5pt}

\title{\bf A Study on Semi-arithmetic Integer Additive Set-Indexers of Graphs}
\author{{\bf N K Sudev \footnote{Department of Mathematics, Vidya Academy of Science \& Technology, Thalakkottukara, Thrissur - 680501, email: {\em sudevnk@gmail.com}}} and {\bf K A Germina\footnote{Department of Mathematics, School of Mathematical \& Physical Sciences, Central University of Kerala, Kasaragod, email:{\em srgerminaka@gmail.com}}}}
\date{}
\begin{document}
\maketitle

\begin{abstract}
An integer additive set-indexer is defined as an injective function $f:V(G)\rightarrow 2^{\mathbb{N}_0}$ such that the induced function $g_f:E(G) \rightarrow 2^{\mathbb{N}_0}$ defined by $g_f (uv) = f(u)+ f(v)$ is also injective. An integer additive set-indexer $f$ is said to be an arithmetic integer additive set-indexer if every element of $G$ are labeled by non-empty sets of non negative integers, which are in arithmetic progressions. An integer additive set-indexer $f$ is said to be a semi-arithmetic integer additive set-indexer if vertices of $G$ are labeled by non-empty sets of non negative integers, which are in arithmetic progressions, but edges are not labeled by non-empty sets of non negative integers, which are in arithmetic progressions. In this paper, we discuss about semi-arithmetic integer additive set-indexer and establish some results on this type of integer additive set-indexers. 
\end{abstract}
\textbf{Key words}: Integer additive set-indexers, uniform integer additive set-indexers, arithmetic integer additive set-indexers, semi-arithmetic integer additive set-indexer.\\
\textbf{AMS Subject Classification : 05C78}

\section{Introduction}

\subsection{Preliminaries on Set-labeling}

For all  terms and definitions, not defined in this paper, we refer to \cite{FH} and for more about graph labeling, we refer to \cite{JAG}. Unless mentioned otherwise, all graphs considered here are simple, finite and have no isolated vertices. All sets mentioned in this paper are finite sets of non-negative integers. We denote the cardinality of a set $A$ by $|A|$.

Let $\mathbb{N}_0$ denote the set of all non-negative integers. For all $A, B \subseteq \mathbb{N}_0$, the sum of these sets,denoted by  $A+B$, is defined by $A + B = \{a+b: a \in A, b \in B\}$.  The set $A+B$ defined above is known as the {\em sum set} of the sets $A$ and $B$. 

The following are the major concepts introduced in \cite{GS0}.

Let $A$ and $B$ the set-labels of two adjacent vertices of a given graph $G$. Two ordered pairs $(a,b)$ and $(c,d)$ in $A\times B$ {\em compatible} if $a+b=c+d$. If $(a,b)$ and $(c,d)$ are compatible, then we write $(a,b)\sim (c,d)$. Clearly, $\sim$ is an equivalence relation. A {\em compatible class} of an ordered pair $(a,b)$ in $|A\times B|$ with respect to the integer $k=a+b$ is the subset of $A\times B$ defined by $\{(c,d)\in A\times B:(a,b)\sim (c,d)\}$ and is denoted by $[(a,b)]_k$ or $\mathsf{C}_k$.  If $(a,b)$ is the only element in the compatibility class $[(a,b)]_k$, then it is called a {\em trivial class}. The compatibility classes which contain the maximum possible number of elements is called {\em saturated classes}. The compatibility class that contains maximum elements is called a {\em maximal compatibility class}.
 
\begin{proposition}\label{P-CardCC}
\cite{GS0} The maximum possible cardinality of a compatibility class in $(A,B)$ is $n$, where $n=min(|A|,|B|)$. That is, the cardinality of a saturated class in $(A,B)$ is $min(|A|,|B|)$.
\end{proposition}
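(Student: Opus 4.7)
The plan is to bound the cardinality of any compatibility class $\mathsf{C}_k = \{(c,d)\in A\times B : c+d=k\}$ from above by $\min(|A|,|B|)$, and then exhibit a pair $(A,B)$ and an integer $k$ where the bound is attained. The argument is essentially a pigeonhole observation: once $k$ is fixed, the second coordinate of any pair in $\mathsf{C}_k$ is determined by the first.

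More precisely, I would first note that if $(a,b)\sim(c,d)$ with respect to $k$ then $d=k-c$, so the projection $\pi_1:\mathsf{C}_k \to A$ sending $(c,d)\mapsto c$ is injective. This immediately gives $|\mathsf{C}_k|\le |A|$. By the same argument applied to $\pi_2:\mathsf{C}_k\to B$, we get $|\mathsf{C}_k|\le |B|$, hence $|\mathsf{C}_k|\le \min(|A|,|B|)=n$. Since this holds for every $k$, no compatibility class can exceed $n$ in size, and in particular a saturated class has cardinality at most $n$.

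For attainability I would produce an explicit example, since the proposition asserts that $n$ really is the maximum. The natural choice is to take $A$ and $B$ to be arithmetic progressions with a common difference, say $A=\{a_0,a_0+d,\dots,a_0+(p-1)d\}$ and $B=\{b_0,b_0+d,\dots,b_0+(q-1)d\}$ with $p=|A|\le |B|=q$. Setting $k = a_0+b_0+(p-1)d$, the pairs $(a_0+id,\, b_0+(p-1-i)d)$ for $i=0,1,\dots,p-1$ all lie in $A\times B$ and have sum $k$, giving a compatibility class of size exactly $p=\min(|A|,|B|)$.

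I do not anticipate any real obstacle: the upper bound is a one-line pigeonhole argument, and the only choice to make is how explicit the attainability example should be. One small care point is to distinguish clearly between the general bound on arbitrary compatibility classes and the existence claim about saturated classes, so that the second sentence of the proposition (about saturated classes having cardinality exactly $\min(|A|,|B|)$) is justified by the constructive step and not merely by the bound.
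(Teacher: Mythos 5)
Your argument is correct: the paper states this proposition without proof (it is quoted from \cite{GS0}), and your pigeonhole bound via the injective projection $(c,d)\mapsto c$ together with the arithmetic-progression example attaining $\min(|A|,|B|)$ is exactly the standard argument one would expect behind it. The only cosmetic remark is that, given the paper's definition of a saturated class as one containing the maximum possible number of elements, the second sentence of the proposition follows immediately once attainability is shown, just as you note.
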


The number of distinct compatibility classes in $A\times B$ is called the {\em compatibility index} of the pair of sets $(A,B)$ and is denoted by $\mho_{(A,B)}$. 

\subsection{Integer Additive Set-Indexers}

An {\em integer additive set-indexer} (IASI, in short) is defined in \cite{GA} as an injective function $f:V(G)\rightarrow 2^{\mathbb{N}_0}$ such that the induced function $f^{+}:E(G) \rightarrow 2^{\mathbb{N}_0}$ defined by $f^{+} (uv) = f(u)+ f(v)$ is also injective.  A graph $G$ which admits an IASI is called an IASI graph.

The cardinality of the labeling set of an element (vertex or edge) of a graph $G$ is called the {\em set-indexing number} of that element.

\begin{lemma}\label{L-3}
\cite{GS0} Let $f$ be an IASI of a graph $G$ and $u,v$ be two vertices of $G$. Then, $f^{+}(uv)= f(u)+f(v)=\{a+b:a\in f(u), b\in f(v)\}$. Then, the set-indexing number of the edge $uv$ is $|f^{+}(uv)| = \mho_{(f(u),f(v))}$.
\end{lemma}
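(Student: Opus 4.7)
The plan is to identify the elements of the sumset $f(u)+f(v)$ one-to-one with the distinct compatibility classes of the pair $(f(u),f(v))$ under the relation $\sim$. Writing $A=f(u)$ and $B=f(v)$ for brevity, the first identity $f^{+}(uv)=\{a+b:a\in A,\ b\in B\}$ is just the definition of the induced function combined with the definition of the sumset, so no work is needed there. What remains is the cardinality claim $|A+B|=\mho_{(A,B)}$.

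To get this, I would introduce the map $\Phi:A+B\longrightarrow \{\,\mathsf{C}_k : k\in A+B\,\}$ defined by $\Phi(k)=\mathsf{C}_k=\{(a,b)\in A\times B : a+b=k\}$. For each $k\in A+B$ there is at least one representative pair $(a,b)\in A\times B$ with $a+b=k$, and by the very definition of $\sim$, the equivalence class $[(a,b)]_k$ coincides with $\mathsf{C}_k$; so $\Phi$ is well-defined and its image is exactly the collection of compatibility classes in $A\times B$.

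Next I would verify that $\Phi$ is a bijection. For injectivity, if $k_1\neq k_2$ then $\mathsf{C}_{k_1}\cap \mathsf{C}_{k_2}=\emptyset$, since any pair in the intersection would simultaneously have coordinate-sum $k_1$ and $k_2$. For surjectivity, every compatibility class, being the equivalence class of some $(a,b)$, equals $\mathsf{C}_{a+b}=\Phi(a+b)$ with $a+b\in A+B$. Hence $|A+B|$ equals the number of distinct compatibility classes, which is $\mho_{(A,B)}$ by definition. Applying Lemma-level notation back, $|f^{+}(uv)|=|f(u)+f(v)|=\mho_{(f(u),f(v))}$, which is the desired conclusion.

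There is no substantive obstacle here; the statement essentially unpacks the definitions of the sumset and of the equivalence relation $\sim$. The only point requiring a little care is the observation that the partition of $A\times B$ induced by the coordinate-sum function is exactly the partition into $\sim$-classes, but this is immediate from the defining equivalence $(a,b)\sim(c,d)\Longleftrightarrow a+b=c+d$.
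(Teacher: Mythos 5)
Your proof is correct: the bijection $k\mapsto \mathsf{C}_k$ between elements of the sumset and compatibility classes is exactly what the definitions of $\sim$ and $\mho_{(A,B)}$ demand, and the paper itself imports this lemma from \cite{GS0} without reproducing a proof, so your argument is the standard one. No gaps.
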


An IASI is said to be {\em $k$-uniform} if $|f^{+}(e)| = k$ for all $e\in E(G)$. That is, a connected graph $G$ is said to have a $k$-uniform IASI if all of its edges have the same set-indexing number $k$.  The vertex set $V$ of a graph $G$ is defined to be {\em $l$-uniformly set-indexed}, if all the vertices of $G$ have the set-indexing number $l$.


A {\em strong IASI} is defined in \cite{GS2} as an IASI $f$ such that $|f^{+}(uv)|=|f(u)| |f(v)|$ for all $u,v\in V(G)$. A graph which admits a  strong IASI may be called a {\em strong IASI graph}. A  strong  IASI is said to be  {\em strongly uniform IASI} if $|f^{+}(uv)|=k$, for all $u,v\in V(G)$ and for some positive integer $k$.

\subsection{Arithmetic Integer Additive Set-Indexers}

The studies about arithmetic IASIs of graphs, made in \cite{GS9}, \cite{GS10} and in \cite{GS11}, have established the following concepts.

As elements in the set-labels of all elements of $G$ are in arithmetic progression, they must contain at least three elements. By the term, an {\em arithmetically progressive set}, (AP-set, in short), we mean a set whose elements are in arithmetic progression. We call the common difference of the set-label of an element of a given graph, the {\em deterministic index} of that element.

Let $f:V(G)\to 2^{\mathbb{N}_0}$ be an IASI on $G$. For any vertex $v$ of $G$, if $f(v)$ is an AP-set, then $f$ is called a {\em vertex-aritmetic IASI} of $G$.A graph that admits a vertex-arithmetic IASI is called a {\em vertex-arithmetic IASI graph}. For an IASI $f$ of $G$, if $f^+(e)$ is an AP-set, for all $e\in E(G)$, then $f$ is called an {\em edge-aritmetic IASI} of $G$. A graph that admits an edge-arithmetic IASI is called an {\em edge-arithmetic IASI graph}.

An IASI is said to be an {\em arithmetic integer additive set-indexer} if it is both vertex-arithmetic and edge-arithmetic. That is, an arithmetic IASI is an IASI $f$, under which the set-labels of all elements of a given graph $G$ are AP-sets. A graph that admits an arithmetic IASI is called an {\em arithmetic IASI graph}. If all the set-labels of all vertices of a graph $G$ are AP-sets and the set-labels of edges are not AP-sets,  then the corresponding IASI is called {\em semi-arithmetic IASI}. If all the set-labels of all elements of a graph $G$ are AP-sets with the same difference $d$, then the corresponding IASI is called {\em isoarithmetic IASI}. An arithmetic IASI $f$ of a graph $G$, under which the differences $d_i$ and $d_j$ of the set-labels $f(v_i)$ and $f(v_j)$ respectively for two adjacent vertices $v_i$ and $v_j$ of $G$, holds the conditions $d_j=kd_i$ and $k$ is a non-negative integer such that $1< k \le |f(v_i)|$ is called {\em biarithmetic IASI}.

\begin{theorem}\label{T-AIASI-g}
\cite{GS9} A graph $G$ admits an arithmetic IASI graph $G$ if and only if for any two adjacent vertices in $G$, the deterministic index of one is a positive integral multiple of the deterministic index of the other and this integer is less than or equal to the cardinality of the set-label of the latter.
\end{theorem}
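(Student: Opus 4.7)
The plan is to observe that the arithmetic-IASI condition is purely local: it asks, for every edge $uv$, that the sumset $f(u)+f(v)$ be an AP-set given that both $f(u)$ and $f(v)$ already are. So I fix an edge $uv$, write $f(u)=\{a_0,a_0+d_1,\ldots,a_0+(m-1)d_1\}$ and $f(v)=\{b_0,b_0+d_2,\ldots,b_0+(n-1)d_2\}$ with $d_1\le d_2$ (without loss of generality), and analyse when the image of the map $(i,j)\mapsto a_0+b_0+id_1+jd_2$ on $\{0,\ldots,m-1\}\times\{0,\ldots,n-1\}$ is itself an AP-set. Everything reduces to this purely combinatorial question about the sum of two AP-sets.

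The first step is to pin down what the common difference of $f^+(uv)$ would have to be. The smallest element of $f(u)+f(v)$ is $a_0+b_0$; since $d_1\le d_2$, the next smallest is $a_0+b_0+d_1$, so the common difference is forced to be exactly $d_1$. The element $a_0+b_0+d_2$ also lies in $f(u)+f(v)$, and for it to sit in the AP based at $a_0+b_0$ with common difference $d_1$ it must equal $a_0+b_0+td_1$ for some positive integer $t$. Hence $d_2=td_1$, which gives the ``positive integral multiple'' half of the claim.

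With $d_2=td_1$ in hand, the elements of $f(u)+f(v)$ become $a_0+b_0+(i+jt)d_1$, and the sumset is an AP-set (necessarily with common difference $d_1$) iff the integer set $S_t:=\{i+jt:0\le i\le m-1,\ 0\le j\le n-1\}$ equals the full interval $\{0,1,\ldots,(m-1)+(n-1)t\}$. For each fixed $j$, the contribution to $S_t$ is precisely the interval $[jt,jt+m-1]$, so two consecutive such intervals (for $j$ and $j+1$) meet or overlap iff $(j+1)t\le jt+m$, i.e. iff $t\le m=|f(u)|$. This single inequality drives both implications: if $t>m$ then $S_t$ has a gap and $f^+(uv)$ fails to be an AP-set, while if $t\le m$ then $S_t$ is the full interval and $f^+(uv)$ is the AP-set $\{a_0+b_0+kd_1:0\le k\le (m-1)+(n-1)t\}$.

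The only subtlety I foresee is bookkeeping. I must make sure the cardinality bound is recorded against the vertex with the \emph{smaller} deterministic index (``the latter'' in the statement), and that the local per-edge analysis correctly handles the universal quantifier over adjacent pairs in the theorem. Neither point is mathematically hard, but both are easy to mis-state; apart from them the entire argument is the elementary interval-covering inequality $t\le m$ derived above.
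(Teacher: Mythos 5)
Your argument is correct: pinning the forced common difference of $f^+(uv)$ to the smaller deterministic index $d_1$ via the two smallest sumset elements, deducing $d_2=td_1$, and then reducing everything to whether the translates $[jt,\,jt+m-1]$ tile the interval (i.e.\ $t\le m$) gives both directions cleanly, with the cardinality bound correctly attached to the vertex of smaller index. The paper itself only cites this theorem from \cite{GS9} without reproducing a proof, but your interval-covering analysis is exactly the row-and-column arrangement technique the authors use for the closely related Theorem~\ref{T-SAIASI1}, so this is essentially the intended argument rather than a genuinely different route.
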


\section{Semi-Arithmetic IASIs}

\begin{definition}{\rm
A vertex-arithmetic IASI $f$ of a graph $G$, under which the differences $d_i$ and $d_j$ of the set-labels $f(v_i)$ and $f(v_j)$ respectively for two adjacent vertices $v_i$ and $v_j$ of $G$, holds the conditions $d_j=kd_i$ and $k$ is a non-negative integer greater than $|f(v_i)|$ is called the {\em semi-arithmetic IASI of the first kind}.}
\end{definition}

\begin{definition}{\rm
A vertex-arithmetic IASI $f$ of a graph $G$, under which the differences $d_i$ and $d_j$ of the set-labels $f(v_i)$ and $f(v_j)$ respectively for two adjacent vertices $v_i$ and $v_j$ of $G$ are not multiples of each other, is called the {\em semi-arithmetic IASI of the second kind}.}
\end{definition}

\begin{theorem}\label{T-SAIASI1}
Every first kind semi-arithmetic IASI of a graph $G$ is a strong IASI of $G$. 
\end{theorem}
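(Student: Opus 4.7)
The plan is to show that for any edge $uv$ of $G$, the sumset $f(u)+f(v)$ has exactly $|f(u)|\,|f(v)|$ elements, by exploiting the largeness of $k$ relative to the cardinality of the AP-set with the smaller deterministic index.

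First I fix notation for a generic edge $uv$. Without loss of generality assume $d_u \le d_v$, so by the first-kind hypothesis $d_v = k\,d_u$ with $k > |f(u)|$. Write $m=|f(u)|$ and $n=|f(v)|$, so
\[
f(u)=\{a, a+d_u,\ldots, a+(m-1)d_u\}, \qquad f(v)=\{b, b+d_v,\ldots, b+(n-1)d_v\}.
\]
Then the sumset can be written as
\[
f(u)+f(v) \;=\; \{\, a+b+(i+jk)\,d_u \;:\; 0\le i\le m-1,\; 0\le j\le n-1\,\}.
\]
So proving the strong property reduces to showing that the $mn$ integers $i+jk$, as $(i,j)$ ranges over $\{0,\ldots,m-1\}\times\{0,\ldots,n-1\}$, are all distinct.

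The key step is exactly this counting. Suppose $i_1+j_1k = i_2+j_2k$ with $0\le i_1,i_2\le m-1$ and $0\le j_1,j_2\le n-1$. Then $i_1-i_2=(j_2-j_1)k$, so $k$ divides $i_1-i_2$. But $|i_1-i_2|\le m-1 < k$ by the defining inequality of the first-kind semi-arithmetic IASI, so $i_1=i_2$, which forces $j_1=j_2$. Hence all $mn$ sums are distinct, and since the corresponding integers $a+b+(i+jk)d_u$ are also distinct (as $d_u\ne 0$), we conclude $|f^+(uv)| = mn = |f(u)|\,|f(v)|$.

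Since this holds for every edge $uv\in E(G)$, $f$ satisfies the defining equation of a strong IASI and the theorem follows. The only potentially delicate point is the ``without loss of generality'' at the start: the first-kind condition as stated picks out a direction for the multiple, but since the strong-IASI conclusion $|f^+(uv)|=|f(u)||f(v)|$ is symmetric in $u$ and $v$, renaming the endpoints so that $d_u$ is the smaller deterministic index costs nothing and makes the inequality $k>m$ apply to the correct cardinality in the divisibility argument above.
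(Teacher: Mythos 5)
Your proof is correct and follows essentially the same route as the paper's: both reduce the claim to showing that the $mn$ integers $i+jk$ (equivalently, the entries of the row-by-row arrangement of $f(u)+f(v)$) are pairwise distinct, using the hypothesis $k>|f(u)|$. The only cosmetic difference is that the paper argues by ordering (the last entry of each row is strictly less than the first entry of the next), while you argue by divisibility ($k \mid i_1-i_2$ and $|i_1-i_2|\le m-1<k$); these are interchangeable instances of the same counting idea.
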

\begin{proof}
Assume that the deterministic indices $d_i$ and $d_j$ of two adjacent vertices $v_i$ and $v_j$ respectively in $G$, where $d_i<d_j$ such that $d_j>|f(v_i)|.d_i$. Assume that $f(v_i)=\{a_r = a+rd_i:0 \le r <m\}$ and $f(v_j)=\{b_s=b+s\,k\,d_i:0\le s <n\}$. Now, arrange the terms of $f^+(v_iv_j)=f(v_i)+f(v_j)$ in rows and columns as follows. For $b_s\in f(v_j), 0\le s<n$, arrange the terms of $f(v_i)+b_s$ in $(s+1)$-th row in such a way that equal terms of different rows come in the same column of this arrangement. Then, the common difference between consecutive elements in each row is $d_i$. Since $k>|f(v_i)|$, the difference between the final element of any row and first element of its succeeding row is always greater than $d_i$. Therefore, no element is repeated in this arrangement. Therefore, total number of elements in $f(v_i)+f(v_j)$ is $|f(v_i)|\, |f(v_j)|$. Hence, $f$ is a strong IASI.
\end{proof}

Recall the following result proved in \cite{GS0}.

\begin{proposition}\label{T-SIASI1a}
\cite{GS0} If $f$ is strong IASI defined on a graph $G$, then for each adjacent pair of vertices $u$ and $v$ of $G$, each compatibility class of the pair of set-labels $f(u)$ and $f(v)$ is a trivial class. 
\end{proposition}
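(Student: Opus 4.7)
The plan is to recast the claim as a counting statement about the equivalence relation $\sim$ on $f(u)\times f(v)$ and then use the strong IASI hypothesis to force every class to be a singleton. Write $A=f(u)$ and $B=f(v)$, and let $k_1,k_2,\dots,k_r$ be the distinct values attained by $a+b$ as $(a,b)$ ranges over $A\times B$. The compatibility classes $\mathsf{C}_{k_1},\dots,\mathsf{C}_{k_r}$ are precisely the equivalence classes of $\sim$, so they partition $A\times B$. In particular, $\sum_{i=1}^{r}|\mathsf{C}_{k_i}| = |A\times B| = |A|\,|B|$.

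Next I would identify $r$ with the set-indexing number of the edge $uv$. Each $\mathsf{C}_{k_i}$ corresponds to one distinct sum $k_i$, and every sum in $f^{+}(uv)=A+B$ arises this way exactly once, so $r=|f^{+}(uv)|$. This is also the content of Lemma \ref{L-3}, which identifies $|f^{+}(uv)|$ with the compatibility index $\mho_{(A,B)}$, i.e.\ with the number of equivalence classes.

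Now invoke the strong IASI hypothesis: $|f^{+}(uv)|=|f(u)|\,|f(v)|=|A|\,|B|$. Combining the two displays gives
\[
\sum_{i=1}^{r}|\mathsf{C}_{k_i}| \;=\; |A|\,|B| \;=\; r.
\]
Since each $|\mathsf{C}_{k_i}|\ge 1$, the only way this equation can hold is for every class to satisfy $|\mathsf{C}_{k_i}|=1$. Thus each compatibility class of $(A,B)$ is trivial, as required.

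There is no genuine obstacle here; the only subtle point is making sure one does not confuse the number of compatibility classes with the size of an individual class. Pinning down the identity $r=|f^{+}(uv)|$ via Lemma \ref{L-3} is the step most open to a slip, and I would state it explicitly before performing the final comparison.
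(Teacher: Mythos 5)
Your argument is correct: the compatibility classes partition $f(u)\times f(v)$, their number equals $|f^{+}(uv)|$ by Lemma \ref{L-3}, and the strong IASI condition forces this number to equal $|f(u)|\,|f(v)|$, so every class is a singleton. The paper itself gives no proof of this proposition (it is recalled from \cite{GS0}), but your counting argument is the natural one and fills that role cleanly.
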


\begin{corollary}
Let $f$ be a first kind semi-arithmetic IASI of a graph $G$ and let $v_i$ and $v_j$ be two adjacent vertices in $G$. Then,all the compatibility classes in $f(v_i)\times f(v_j)$ are trivial classes. 
\end{corollary}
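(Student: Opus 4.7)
The plan is to obtain the corollary as an immediate consequence of the two results that precede it, namely Theorem \ref{T-SAIASI1} and Proposition \ref{T-SIASI1a}. Since $f$ is a first kind semi-arithmetic IASI of $G$, Theorem \ref{T-SAIASI1} tells us that $f$ is a strong IASI. Then Proposition \ref{T-SIASI1a}, applied to the adjacent pair $v_i,v_j$, says exactly that every compatibility class of $(f(v_i),f(v_j))$ is a trivial class. Chaining the two implications yields the corollary.

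If I wanted to make the argument self-contained rather than a pure citation chain, I would spell out the counting that underlies it. From Theorem \ref{T-SAIASI1} we have $|f^{+}(v_iv_j)|=|f(v_i)|\,|f(v_j)|$. By Lemma \ref{L-3} this equals $\mho_{(f(v_i),f(v_j))}$, the number of distinct compatibility classes in $f(v_i)\times f(v_j)$. Since $|f(v_i)\times f(v_j)|=|f(v_i)|\,|f(v_j)|$ as well, the $\mho_{(f(v_i),f(v_j))}$ compatibility classes partition a set of the same cardinality, forcing each class to be a singleton, i.e. a trivial class.

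There is essentially no obstacle here; the work is already done in Theorem \ref{T-SAIASI1}. The only thing to be careful about is terminological: the phrase ``all the compatibility classes in $f(v_i)\times f(v_j)$ are trivial'' in the corollary should be matched against the wording in Proposition \ref{T-SIASI1a}, and one should note that both refer to the equivalence classes of the compatibility relation $\sim$ on $f(v_i)\times f(v_j)$ defined in the Preliminaries. Once that identification is made, the corollary is just the combinatorial restatement of strongness in the special case at hand.
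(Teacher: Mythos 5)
Your proposal is correct and matches the paper's own proof exactly: the paper also simply chains Theorem \ref{T-SAIASI1} (first kind semi-arithmetic implies strong) with Proposition \ref{T-SIASI1a} (strong implies all compatibility classes trivial). The extra counting argument you sketch via Lemma \ref{L-3} is a sound optional elaboration but is not needed and does not appear in the paper.
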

\begin{proof}
Since $f$ is a first kind semi-arithmetic IASI by Theorem \ref{T-SAIASI1}, $f$ is a strong IASI. Then, by Proposition \ref{T-SIASI1a}, all compatibility classes in $f(v_i)\times f(v_j)$ are trivial classes.
\end{proof}

An interesting question that arises in this context is about the existence of uniform semi-arithmetic IASIs. The following theorem establishes the necessary and sufficient condition for a semi-arithmetic IASI to be uniform.

\begin{proposition}\label{P-SAIASI2}
If $f$ is a first kind semi-arithmetic IASI of a graph $G$, then no edge of $G$ has a prime set-indexing number.
\end{proposition}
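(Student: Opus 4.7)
The plan is to combine Theorem \ref{T-SAIASI1} with the structural restriction that vertex set-labels are AP-sets. Since $f$ is a first kind semi-arithmetic IASI, Theorem \ref{T-SAIASI1} tells us that $f$ is a strong IASI, which means that for any edge $v_iv_j$ of $G$ we have the product formula
\begin{equation*}
|f^{+}(v_iv_j)| \;=\; |f(v_i)|\,|f(v_j)|.
\end{equation*}
So the set-indexing number of every edge factors as a product of two vertex set-indexing numbers.

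Next I would invoke the convention stated earlier in the paper, namely that the set-label of every vertex, being an AP-set, has at least three elements; in particular, each factor $|f(v_i)|$ and $|f(v_j)|$ is at least $3$ (or at worst at least $2$, which is already enough). Then the set-indexing number $|f^{+}(v_iv_j)|$ is a product of two integers each $\geq 2$, and hence is a composite number. In particular, it cannot be prime.

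Thus the whole argument is a two-line reduction: strongness from Theorem \ref{T-SAIASI1} gives the multiplicative formula, and the minimum-size condition on AP-sets forces both factors to be nontrivial. There is no real obstacle here; the only thing to be careful about is explicitly citing why each factor is $\geq 2$ (i.e.\ that $f$ being vertex-arithmetic forces $|f(v)|\geq 3$ for every $v \in V(G)$), because otherwise a factor could be $1$ and the product could in principle be prime.
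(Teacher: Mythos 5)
Your argument is correct and follows essentially the same route as the paper: invoke Theorem \ref{T-SAIASI1} to get the product formula $|f^{+}(v_iv_j)|=|f(v_i)|\,|f(v_j)|$, then use the fact that AP-set labels have at least three elements to rule out a factor of $1$, so the product cannot be prime. The paper phrases this as a contradiction (a prime set-indexing number would force one factor to be $1$), but the content is identical to your direct compositeness observation.
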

\begin{proof}
Let $f$ be a first kind semi-arithmetic IASI of a graph $G$. Then, by Theorem \ref{T-SAIASI1}, $f$ is a strong IASI. Therefore, for any two adjacent vertices $u$ and $v$ of $G$, $|f^+(uv)|=|f(u)|\,|f(v)|$. If the edge $uv$ has a prime set-indexing number, say $p$, then $|f(u)|$ and $|f(v)|$ divide $p$. Therefore, either $|f(u)|=1$ or $|f(v)|=1$, which is a contradiction to the fact that every set-label have at least three elements. Hence, no edge of $G$ can have a prime set-indexing number.
\end{proof}

\begin{theorem}\label{T-SAIASI3}
A first kind semi-arithmetic IASI of a graph $G$ is a uniform IASI if and only if either $G$ is bipartite or $V(G)$ is uniformly set-indexed.
\end{theorem}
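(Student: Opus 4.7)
My plan hinges on Theorem~\ref{T-SAIASI1}: every first-kind semi-arithmetic IASI $f$ is automatically a strong IASI, so $|f^{+}(uv)|=|f(u)|\,|f(v)|$ for every edge $uv$. This reduces the question of uniformity to a purely combinatorial one about the cardinality function $\phi\colon V(G)\to\mathbb{N}$ given by $\phi(v):=|f(v)|$: namely, $f$ is $c$-uniform if and only if $\phi(u)\,\phi(v)=c$ on every edge.

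For the ``only if'' direction, I would assume $f$ is $c$-uniform and that $V(G)$ is not uniformly set-indexed, and deduce that $G$ is bipartite. Choose any edge $u_{0}v_{0}$ with $\phi(u_{0})=a\neq b=\phi(v_{0})$, so $ab=c$. For every neighbour $w$ of $u_{0}$, the identity $\phi(u_{0})\phi(w)=c$ forces $\phi(w)=b$; symmetrically, every neighbour of $v_{0}$ has $\phi$-value $a$. Propagating this along walks, $\phi$ takes only the two values $a,b$ on the connected component of $u_{0}$, and since $a\neq b$ implies $a^{2}\neq c\neq b^{2}$, adjacent vertices must carry distinct $\phi$-values. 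Hence $\phi^{-1}(a)$ and $\phi^{-1}(b)$ form a proper $2$-colouring of that component; repeating this over each component in which $\phi$ is non-constant completes the argument.

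For the ``if'' direction, if $V(G)$ is $l$-uniformly set-indexed then every edge evaluates to $l^{2}$ and $f$ is $l^{2}$-uniform. In the bipartite case, I would read the hypothesis — in the only way consistent with the necessity argument — as providing a bipartition $(X,Y)$ on which $\phi$ is constantly $a$ on $X$ and constantly $b$ on $Y$; every edge then evaluates to $ab$. The main obstacle I anticipate is clarifying this last reading: bare graph-theoretic bipartiteness does not by itself pin down $\phi$, so the equivalence acquires genuine content only once the bipartite alternative is understood as carrying the two-cardinality structure produced in the necessity step. The heart of the argument is that propagation in the necessity step, which extracts both the two-valuedness of $\phi$ and the bipartiteness of $G$ from the single product identity $\phi(u)\phi(v)=c$.
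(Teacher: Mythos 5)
Your necessity argument is essentially the paper's own: both reduce, via Theorem~\ref{T-SAIASI1}, to the product identity $|f(u)|\,|f(v)|=c$ on edges, propagate the two cardinalities along walks, and use $a^2\neq c\neq b^2$ to conclude that the two cardinality classes form a bipartition. The only real divergence is in the bipartite half of the sufficiency direction, where you both confront the same defect in the statement but resolve it differently: you reinterpret the hypothesis so that the given $f$ is constant on each side of the bipartition, whereas the paper abandons the given $f$ altogether and \emph{constructs} a fresh labeling --- $m$-element AP-sets with difference $d$ on $X$, $n$-element AP-sets with difference $kd$ on $Y$ for large $k$ --- so that its converse really proves that a bipartite graph \emph{admits} a uniform first-kind semi-arithmetic IASI rather than that the originally given one is uniform. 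Your reading keeps the statement an honest biconditional about a fixed $f$ but requires the extra structural assumption you flag; the paper's reading is self-contained but silently changes the quantifier on $f$. Identifying that ambiguity explicitly, as you do, is the right instinct --- neither version of the converse follows from bare bipartiteness of $G$ alone.
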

\begin{proof}
Let $f$ be a first kind semi-arithmetic IASI defined on a graph $G$. For a positive integer $l$, assume that $f$ is an $l$-uniform IASI. Let $v_i$ and $v_j$ be any two adjacent vertices of $G$ such that $|f(v_i)|=m$ and $|f(v_j)|=n$. Then, $m\,n =l$. Since, $f$ is $l$-uniform, every vertex that is adjacent to the vertex $v_i$ must have the set-indexing number $n$ and every vertex that is adjacent to the vertex $v_j$ must have the set-indexing number $m$. That is, in general, all the vertices adjacent to a vertex having set-indexing number $m$, must have the set-indexing number $n$ and all the vertices adjacent to a vertex having set-indexing number $n$, must have the set-indexing number $m$. If $m=n$, then our proof is complete. If $m\ne n$, then let $X$ be the set of all vertices of $G$ having set-indexing number $m$ and $Y$ be the set of all vertices of $G$ having set-indexing number $n$. Since, $m^2\ne l$, no two vertices in $X$ can be adjacent. Similarly, since $n^2\ne l$, no two vertices in $Y$ also can be adjacent. Therefore, $(X,Y)$ is a bipartition of $G$.

Conversely, assume that either $G$ is bipartite or $V(G)$ is uniformly set-indexed. If $V(G)$ is $n$-uniformly indexed, then $|f^+(uv)|=|f(u)|\,|f(v)|=n^2 ~ \forall u,v~\in V(G)$. That is, $f$ is $n^2$-uniform. 
Now assume that $V(G)$ is not uniformly set-indexed. Then, by hypothesis, $G$ is bipartite. Let $(X,Y)$ be a bipartition of $G$. For a positive integer $d$, label all the vertices in $X$ by distinct $m$-element AP-sets with common difference $d$,  and label all the vertices in $Y$ by distinct $n$-element AP-sets with common difference $k\,d$, where $k$ is a positive integer such that $k> max_{v_i\in X}\{f(v_i)\}$. Then, $f$ is a first kind semi-arithmetic IASI and by Theorem \ref{T-SAIASI1}, is a strong IASI. Therefore, every edge of $G$ has the set-indexing number $mn$.
\end{proof}

What is condition required for a second kind semi-arithmetic IASI to be a strong IASI? The following theorem provides an answer to this question.

\begin{theorem}
Let $f$ be an semi-arithmetic IASI defined on $G$.  Also, let $|f(v_j)|=q.|f(v_i)|+r, 0<r<|f(v_i)|$. Then, $f$ is a strong IASI if and only if $q>|f(v_i)|$ or the differences$d_i$ and $d_j$ of two set labels $f(v_i)$ and $f(v_j)$ respectively, are relatively prime.
\end{theorem}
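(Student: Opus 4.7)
My approach would echo the row-and-column argument used in the proof of Theorem \ref{T-SAIASI1}. Set $m = |f(v_i)|$, $n = |f(v_j)|$, and write $f(v_i) = \{a + p\, d_i : 0 \le p \le m-1\}$ and $f(v_j) = \{b + s\, d_j : 0 \le s \le n-1\}$. The sumset $f^+(v_iv_j) = f(v_i) + f(v_j)$ consists of the integers $a + b + p\, d_i + s\, d_j$, so $f$ is a strong IASI at the edge $v_iv_j$ precisely when the map $(p, s) \mapsto p\, d_i + s\, d_j$ is injective on $\{0, \ldots, m-1\} \times \{0, \ldots, n-1\}$. Equivalently, the Diophantine equation $p\, d_i = s\, d_j$ must admit no nonzero solution with $|p| < m$ and $|s| < n$.

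To analyse this equation I would set $g = \gcd(d_i, d_j)$, writing $d_i = g\alpha$, $d_j = g\beta$ with $\gcd(\alpha, \beta) = 1$. Every nonzero solution of $p\, d_i = s\, d_j$ then has the form $(p, s) = (t\beta, t\alpha)$ for some $t \in \mathbb{Z} \setminus \{0\}$, so the strong-IASI condition reduces to the clean alternative $\beta \ge m$ or $\alpha \ge n$; that is, $d_j/g \ge |f(v_i)|$ or $d_i/g \ge |f(v_j)|$. With this reduction the two sufficient hypotheses follow transparently. If $\gcd(d_i,d_j) = 1$, then $g = 1$, $\alpha = d_i$ and $\beta = d_j$, and the inequality is witnessed directly by the semi-arithmetic sizes of the $d$'s. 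If $q > m$ in the expansion $d_j = q\, d_i + r$, then $d_j > m\, d_i \ge m g$, forcing $\beta > m$ at once.

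For the converse I would contrapositively assume $q \le m$ and $g > 1$, then exhibit an explicit collision by choosing $t = 1$ and verifying that $\beta \le m - 1$ and $\alpha \le n - 1$ both hold under these hypotheses, producing two distinct pairs in the index rectangle that map to the same sum. The main obstacle I anticipate is exactly this last step: certifying that when both alternatives fail, the Bezout indices $\alpha$ and $\beta$ really do fit strictly below $n$ and $m$ respectively so that a genuine collision lies inside the relevant range, rather than just existing in $\mathbb{Z}$. Splitting into the cases $r = 0$ (which hands the problem back to Theorem \ref{T-SAIASI1}) and $r > 0$, together with the fixed convention $d_i \le d_j$, should keep the bookkeeping clean in the second-kind regime where $\alpha, \beta$ are genuinely coprime integers larger than $1$.
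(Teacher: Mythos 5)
Your reduction of the strong-IASI condition at the edge $v_iv_j$ to the injectivity of $(p,s)\mapsto p\,d_i+s\,d_j$ on $\{0,\dots,m-1\}\times\{0,\dots,n-1\}$, and then, via $d_i=g\alpha$, $d_j=g\beta$ with $\gcd(\alpha,\beta)=1$, to the clean alternative ``$\beta\ge m$ or $\alpha\ge n$,'' is correct and is sharper than the paper's own argument (which rests on the unjustified assertion that $\gcd(d_i,d_j)=1$ implies ``$r$ cannot be a divisor of $d_i$'' and hence no two sums coincide). The genuine gap is the step where you claim the theorem's two hypotheses follow ``transparently'' from this alternative. They do not. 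If $\gcd(d_i,d_j)=1$ then $\alpha=d_i$ and $\beta=d_j$, but nothing in the definition of a second-kind semi-arithmetic IASI forces $d_j\ge|f(v_i)|$ or $d_i\ge|f(v_j)|$: take $d_i=2$, $d_j=3$, $f(v_i)=\{0,2,4,6,8\}$, $f(v_j)=\{0,3,\dots,18\}$, so $m=5$, $n=7=1\cdot5+2$, $q=1$, $r=2$. The differences are coprime, yet $0+6=6+0$ is a collision (indeed the sumset sits inside $\{0,\dots,26\}$ and cannot have $35$ elements), so $f$ is not strong there. No completion of your argument can rescue the ``$\gcd=1$'' clause. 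The converse has the symmetric defect you yourself flagged as the main obstacle: from $q\le m$ and $g>1$ one cannot certify $\beta\le m-1$ and $\alpha\le n-1$. For instance $d_i=4$, $d_j=6$, $f(v_i)=\{0,4,8\}$ gives $g=2$, $\beta=3>m-1=2$, so there is no collision and the edge is strong even though both clauses of the theorem fail.

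In other words, your computation actually establishes that the correct characterization is ``$d_j/\gcd(d_i,d_j)\ge|f(v_i)|$ or $d_i/\gcd(d_i,d_j)\ge|f(v_j)|$,'' which is not equivalent to the stated condition; the asserted equivalence fails in both directions, and the paper's proof glosses over exactly the two points where your plan stalls. The only piece that survives is the observation that $q>|f(v_i)|$ (with $q$ read as the quotient of $d_j$ by $d_i$, which is evidently what is intended despite the statement writing $|f(v_j)|=q\cdot|f(v_i)|+r$) forces $\beta>m$ and hence strongness --- the same row-by-row argument as in Theorem \ref{T-SAIASI1}. So rather than trying to close the gap, the honest conclusion of your approach is a corrected statement of the theorem.
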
\label{T-AIASI-A}
\begin{proof}
First assume that $q>|f(v_i)|$. We arrange the elements of $f(v_i)+f(v_j)$ into rows and columns such that the sum of elements of $f(v_i)$ with the $r$-th element of $f(v_j)$, $1\le r\le |f(v_j)|$, as the elements of $r$-th row of the new arrangement. Since $q>|f(v_i)|$, in this arrangements, all the elements in the $r+1$-th row will be greater than all elements of the $r$-th row. That is, all elements in this arrangement are distinct. Hence, $f$ is a strong IASI. 

Now, assume that $q\le |f(v_i)|$. Then, by hypothesis, $gcd (d_i,d_j)=1$. Therefore, $r$ can not be a divisor of $d_i$. Then, no two elements of $f(v_i)+f(v_j)$ can belong to the same compatible class. Hence, $f$ is a strong IASI.

Conversely, assume that $f$ is a strong IASI of $G$. Then, every compatible class $\mathsf{C}_{(a,b)}$ in $f(v_i)\times f(v_j)$ is a trivial class. This condition holds when $q>|f(v_i)$. Let $q\le |f(v_i)$. If $gcd(d_i,d_j)=t\ne 1$, then $t|d_i$ and hence $t|r$. Therefore, for some integers $q_1, q_2$, we have $q_1.d_i=q_2.r,~ q_1<q_2$. Hence, some terms in $f(v_i)\times f(v_j)$ are the same, which is a contradiction to the fact that $f$ is a strong IASI. Hence, $d_i$ (or $d_j$) is a multiple of $r$. Hence, $gcd (d_i,d_j)=1$.
\end{proof}

We note that an arithmetic IASI with arbitrary differences do not have saturated classes. In the following discussion, we find the number of maximal compatible classes for a second kind semi-arithmetic IASIs in the following theorem.

\begin{theorem}
Let $f$ be an arithmetic IASI with arbitrary differences on a graph $G$. Let $|f(v_j)|=q.|f(v_i)|+r$. Also, let $q_1$ and $q_2$ be the positive integers such that $q_1.|f(v_j)=q_2.r$. Then, the number of elements in a maximal compatible class of $f(v_i)\times f(v_j)$ is $\lfloor \frac{|f(v_j)|}{q_1} \rfloor$.
\end{theorem}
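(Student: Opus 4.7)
The plan is to adapt the grid-arrangement argument used in the proof of Theorem~\ref{T-AIASI-A} and count directly the size of a largest compatibility class. Set $m=|f(v_i)|$ and $n=|f(v_j)|$, and write $f(v_i)=\{a+rd_i:0\le r<m\}$ and $f(v_j)=\{b+sd_j:0\le s<n\}$. Two index pairs $(r,s)$ and $(r',s')$ sit in the same compatibility class precisely when $(r-r')d_i=(s'-s)d_j$, and this single algebraic identity drives the whole count.

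First I would translate the hypothesis determining $q_1$ and $q_2$ into the minimal admissible shift $(\Delta r,\Delta s)$ of index pairs that preserves the sum. Concretely, $q_1$ plays the role of the smallest positive step one must take along the $s$-axis before a compatible index reappears, while $q_2$ is the matching step along the $r$-axis; the defining relation between $q_1$, $q_2$ and the remainder $r$ coming from $|f(v_j)|=q|f(v_i)|+r$ guarantees that these steps are genuinely the minimal ones and that no smaller coincidence is possible within the rectangle.

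Next I would fix a reference index pair $(r_0,s_0)$ inside a maximal class and describe the full class as $\{(r_0+kq_2,\,s_0-kq_1):k\in\mathbb{Z}\}\cap\bigl([0,m)\times[0,n)\bigr)$. Maximising the class size then reduces to counting the integers $k$ for which the $s$-constraint $0\le s_0-kq_1<n$ holds, and an optimal choice of $s_0$ (near $n-1$) yields precisely $\lfloor n/q_1\rfloor$ admissible values of $k$. One then has to verify that the companion constraint on $r_0+kq_2$ is not the binding one, so that the bottleneck indeed comes from the $s$-axis; this is where the bound $r<|f(v_i)|$ together with the magnitude of the quotient $q$ is used.

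The main obstacle is the boundary bookkeeping: one must argue that the optimal class contains exactly $\lfloor n/q_1\rfloor$ elements rather than some off-by-one variant, and that the $s$-axis constraint genuinely dominates the $r$-axis constraint for a well-chosen reference pair. The interplay among $q$, $q_1$, $q_2$ and the remainder $r$ is what funnels the count into the clean floor expression $\lfloor |f(v_j)|/q_1\rfloor$, and making this interplay precise, while ruling out larger classes arising from non-minimal shifts, is the delicate step of the proof.
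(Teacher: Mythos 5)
Your grid set-up is the same as the paper's: arrange the sums $a+rd_i+b+sd_j$ in an $m\times n$ array and observe that $(r,s)$ and $(r',s')$ collide exactly when $(r-r')d_i=(s'-s)d_j$, so each compatibility class is the trace of an arithmetic progression of index pairs on the rectangle $[0,m)\times[0,n)$. The paper's own proof goes no further than asserting that ``some values appear in the arrangement $\lfloor n/q_1\rfloor$ times,'' so in spirit you are reproducing its argument. However, the difficulties you flag in your last paragraph are not boundary bookkeeping that can be deferred; they are exactly where the claimed formula fails, and neither your sketch nor the paper closes them.

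Concretely: the class through $(r_0,s_0)$ is $\{(r_0+kq_2,\,s_0-kq_1):k\in\mathbb{Z}\}$ intersected with the rectangle, so its size is the \emph{minimum} of the number of admissible $k$ coming from the $r$-constraint and from the $s$-constraint. Nothing in the hypotheses forces the $s$-constraint to bind: if $q_2$ is large relative to $m$ while $q_1$ is small relative to $n$, the $r$-axis truncates the class first and the answer is not $\lfloor n/q_1\rfloor$; the hypothesis $r<|f(v_i)|$ concerns cardinalities and gives you no control over $q_2$ versus $m$. Even when the $s$-constraint does bind, the optimal starting residue yields $\lceil n/q_1\rceil$ hits in $\{0,\dots,n-1\}$, which exceeds $\lfloor n/q_1\rfloor$ whenever $q_1\nmid n$. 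Finally, the translation step in your second paragraph cannot be carried out as described: the minimal collision shift derived from $(r-r')d_i=(s'-s)d_j$ is $(d_j/g,\,d_i/g)$ with $g=\gcd(d_i,d_j)$, a quantity determined by the \emph{differences}, whereas the theorem defines $q_1,q_2$ by a relation among the \emph{cardinalities} ($q_1|f(v_j)|=q_2r$ with $r$ the remainder of $|f(v_j)|$ modulo $|f(v_i)|$). These are not the same numbers, so the identification of $q_1$ with the minimal $s$-step has to be imposed rather than deduced --- the paper's proof does this silently when it switches to the relation $q_1d_i=r_1r$ --- and without that reinterpretation the stated floor expression cannot be recovered.
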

\begin{proof}
We use the same notations as in Theorem \ref{T-AIASI-A}. Let $|f(v_i)|=m$ and $|f(v_j)=n$. Arrange the elements of $f(v_i)+f(v_j)$ into rows and columns such that the sum of elements of $f(v_i)$ with the $r$-th element of $f(v_j)$, $1\le r\le |f(v_j)|$, as the elements of $r$-th row of the new arrangement. By Theorem \ref{T-AIASI-A}, a compatibility class contains two or more elements if $q\le |f(v_i)|$ and $gcd(d_i,d_j)\neq 1$. Hence, there exist some positive integers $q_1$ and $r_1$ such that $q_1.d_i=r_1.r,~ q_1<r_1$. If $q_1<n$, then some values appear in the arrangement $\lfloor \frac{n}{q_1} \rfloor$ times. 
\end{proof}

Analogous to the corresponding theorems for other types of IASI graphs, we observe the following result.

\begin{proposition}
Any subgraph of a semi-arithmetic IASI graph admits a semi-arithmetic IASI.
\end{proposition}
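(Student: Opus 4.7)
The plan is to restrict the given semi-arithmetic IASI of $G$ to the subgraph $H$ and verify that every defining property descends. So, let $f:V(G)\to 2^{\mathbb{N}_0}$ be a semi-arithmetic IASI of $G$, and let $H$ be any subgraph of $G$. I would define $f' = f|_{V(H)}$ and show that $f'$ is a semi-arithmetic IASI of $H$.

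First I would verify that $f'$ is itself an IASI of $H$. Since $V(H)\subseteq V(G)$, the restriction $f'$ of the injection $f$ is still injective. Similarly, because $E(H)\subseteq E(G)$, for every edge $uv\in E(H)$ we have $f'^{+}(uv) = f'(u)+f'(v) = f(u)+f(v) = f^{+}(uv)$, so the induced edge-map $f'^{+}$ is exactly the restriction of $f^{+}$ to $E(H)$; since $f^{+}$ is injective on $E(G)$, the restriction $f'^{+}$ is injective on $E(H)$. Hence $f'$ is an IASI of $H$.

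Next I would check the arithmetic requirements. For any $v\in V(H)$, the label $f'(v) = f(v)$ is an AP-set because $f$ is vertex-arithmetic on $G$. For any edge $uv\in E(H)\subseteq E(G)$, the label $f'^{+}(uv) = f^{+}(uv)$ fails to be an AP-set because $f$ is semi-arithmetic on $G$. Thus every vertex of $H$ carries an AP-set label and no edge of $H$ carries an AP-set label, which is precisely the definition of a semi-arithmetic IASI.

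There is no real obstacle here; the argument is routine because every structural property of a semi-arithmetic IASI is defined locally at vertices and edges, and these properties are preserved under passing to the induced restriction on a smaller vertex/edge set. The only minor caveat is the standing convention of the paper that all graphs are simple, finite, and free of isolated vertices; I would implicitly restrict attention to subgraphs satisfying this, or simply note that deletion of isolated vertices has no effect on whether the remaining labeling is a semi-arithmetic IASI.
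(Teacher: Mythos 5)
Your restriction argument is correct and is exactly the standard argument the paper has in mind (the paper in fact states this proposition without proof, merely observing it is "analogous to the corresponding theorems for other types of IASI graphs"). All the relevant properties --- injectivity of $f$ and $f^{+}$, vertex labels being AP-sets, edge labels failing to be AP-sets --- are inherited verbatim by the restriction to any subgraph, and your caveat about isolated vertices is the only point of care needed.
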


\begin{definition}{\rm
\cite{DBW} For a given graph $G$, its line graph $L(G)$ is a graph such that  each vertex of $L(G)$ represents an edge of $G$ and two vertices of $L(G)$ are adjacent if and only if their corresponding edges in $G$ incident on a common vertex in $G$.}
\end{definition}

\begin{proposition}
The line graph $L(G)$ of a semi-arithmetic graph never admits a semi-arithmetic IASI.
\end{proposition}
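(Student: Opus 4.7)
The plan is to show by a structural argument that no labeling $g : V(L(G)) \to 2^{\mathbb{N}_0}$ can simultaneously be an IASI, be vertex-arithmetic, and have every edge labeled by a non-AP-set. My strategy exploits the Krausz clique decomposition of $L(G)$: for every vertex $v \in V(G)$ of degree $d(v) \ge 2$, the edges incident with $v$ form a clique $K_{d(v)}$ in $L(G)$, and every edge of $L(G)$ lies in some such clique. Hence any semi-arithmetic IASI on $L(G)$ would restrict to a semi-arithmetic labeling of each Krausz clique, and I would try to rule that out clique by clique.

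I would then isolate the smallest nontrivial case, a triangle $T \subseteq L(G)$ coming from a degree-three vertex of $G$. Writing $d_1, d_2, d_3$ and $m_1, m_2, m_3$ for the deterministic indices and cardinalities of the three vertex labels of $T$, Theorem~\ref{T-AIASI-g} translates the non-AP requirement on each edge of $T$ into the negation of a divisibility-bounded-by-cardinality condition on the corresponding pair of indices, so each pair $(d_i, d_j)$ must fall into either the first-kind regime ($d_j = k d_i$ with $k > m_i$) or the second-kind regime ($d_i, d_j$ mutually non-multiples). The aim would be to argue combinatorially that these three pairwise conditions cannot simultaneously hold together with the IASI requirement, which would contradict the existence of $g$ and, by induction on the clique size, rule out any semi-arithmetic IASI on $L(G)$.

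The hard part, and the step at which I expect the argument to break, is exactly this combinatorial step. Choosing $d_1, d_2, d_3$ to be three distinct primes each exceeding $\max(m_1, m_2, m_3)$ puts every pair squarely in the second-kind regime and produces an explicit semi-arithmetic labeling of $T$; the analogous prime-indexed construction works for $K_n$ for arbitrary $n$. Thus the Krausz clique structure by itself yields no obstruction, and any proof of the proposition as worded must exploit some global interaction between distinct Krausz cliques that meet at a common vertex $uv \in V(L(G))$. I would therefore try to propagate index constraints across such shared vertices---for instance by showing that the deterministic index assigned to $uv$ must play incompatible roles in the two cliques $K_{d(u)}$ and $K_{d(v)}$---but I expect this to be the crux of the difficulty, since an edge of $L(G)$ imposes conditions only on a single pair of deterministic indices and gives no obvious constraint between indices in non-adjacent cliques. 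Without such an additional mechanism I do not see how to eliminate the prime-indexed counterexample, so I anticipate that completing the proof as stated will require either a strengthening of the hypothesis on $G$ or a genuinely new argument relating the two labelings.
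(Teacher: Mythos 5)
You have read the proposition as a claim about \emph{every possible} labeling of the abstract graph $L(G)$, and under that reading your analysis is essentially correct and fatal to the statement: assigning to the vertices of $L(G)$ AP-sets whose common differences are distinct primes (so that no two are multiples of one another) puts every adjacent pair into the second-kind regime, every edge label then fails to be an AP-set, and $L(G)$ does admit a semi-arithmetic IASI. The Krausz-clique machinery is therefore beside the point; there is no local or global obstruction to be found, because none exists.

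The paper, however, intends something much weaker and disposes of it in one sentence: the vertices of $L(G)$ are understood to carry the \emph{induced} set-labels, i.e.\ each vertex of $L(G)$ inherits the set-label $f^{+}(e)$ of the corresponding edge $e$ of $G$. Since $f$ is semi-arithmetic on $G$, these inherited labels are by definition not AP-sets, so the induced labeling of $L(G)$ is not even vertex-arithmetic and hence cannot be a semi-arithmetic IASI. That is the entire argument. So the gap in your proposal is not a mathematical error but a mismatch of conventions: you set out to prove, and correctly found you could not prove, a strictly stronger statement than the one the paper establishes, and your prime-indexed construction in fact refutes that stronger statement. The proposition is true only relative to the induced labeling, and the paper ought to say so explicitly.
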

\begin{proof}
The set-labels of edges of a semi-arithmetic IASI graph $G$ are not AP-sets. Hence, the vertices in $L(G)$ corresponding to the edges in $G$ do not have AP-sets as their set-labels. Therefore, $L(G)$ does not admit a semi-arithmetic IASI.
\end{proof}

\begin{definition}{\rm
\cite{MB} The {\em total graph} of a graph $G$ is the graph, denoted by $T(G)$, is the graph having the property that a one-to one correspondence can be defined between its points and the elements (vertices and edges) of $G$ such that two points of $T(G)$ are adjacent if and only if the corresponding elements of $G$ are adjacent (either  if both elements are edges or if both elements are vertices) or they are incident (if one element is an edge and the other is a vertex). }
\end{definition}

\begin{proposition}
The total graph $T(G)$ of a semi-arithmetic graph never admits a semi-arithmetic IASI.
\end{proposition}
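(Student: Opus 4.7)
The plan is to mimic the proof strategy already used for the line graph in the previous proposition, which turns on the fact that the edge-labels of a semi-arithmetic IASI graph are by definition not AP-sets. By the definition of $T(G)$, its vertex set is in bijective correspondence with $V(G)\cup E(G)$, and therefore $T(G)$ contains, as a subset of its vertices, a copy of $E(G)$ (namely the vertices of $T(G)$ corresponding to the edges of $G$). Any IASI on $T(G)$ induced from a semi-arithmetic IASI $f$ of $G$ assigns to each such vertex the set-label $f(u)+f(v)$, where $uv$ is the corresponding edge of $G$.

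First I would recall the definition of $T(G)$ and fix notation for the two kinds of vertices of $T(G)$: the vertex-type vertices $V_1=\{x_v:v\in V(G)\}$ and the edge-type vertices $V_2=\{x_e:e\in E(G)\}$. Next I would observe that, under the IASI inherited from $f$, the vertices in $V_1$ receive the AP-set labels $f(v)$, while the vertices in $V_2$ receive the labels $g_f(e)=f(u)+f(v)$ for $e=uv\in E(G)$. Because $f$ is a semi-arithmetic IASI, the hypothesis guarantees that $g_f(e)$ is not an AP-set for any edge $e$ of $G$.

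From there the conclusion is immediate: a semi-arithmetic IASI of $T(G)$ requires every vertex of $T(G)$ to carry an AP-set as its set-label (that is the vertex-arithmetic part of the definition). Since the vertices in $V_2$ fail this requirement, no such semi-arithmetic IASI can exist on $T(G)$. Hence $T(G)$ admits no semi-arithmetic IASI, which is exactly what needs to be shown.

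The only potential obstacle I foresee is the usual ambiguity in these statements about whether one considers only the inherited labeling or any possible relabeling of $T(G)$; I would address this in the same spirit as the line graph proposition above, noting that the argument is structural and depends only on the fact that non-AP-sets inevitably appear among the vertex-labels of $T(G)$ once the original labels are inherited from the semi-arithmetic IASI $f$ of $G$.
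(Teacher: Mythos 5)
Your argument is exactly the paper's: the vertices of $T(G)$ corresponding to edges of $G$ inherit the edge set-labels, which by the semi-arithmetic hypothesis are not AP-sets, so the vertex-arithmetic requirement fails for $T(G)$. This matches the paper's proof in both structure and detail (including the implicit reliance on the inherited labeling), so nothing further is needed.
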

\begin{proof}
The set-labels of edges of a semi-arithmetic IASI graph $G$ are not AP-sets. Therefore, the vertices in $T(G)$ corresponding to the edges in $G$ do not have AP-sets as their set-labels. Hence, $T(G)$ does not admit a semi-arithmetic IASI.
\end{proof}

\begin{definition}{\rm
\cite{FH} By {\em edge contraction operation} in $G$, we mean an edge, say $e$, is removed and its two incident vertices, $u$ and $v$, are merged into a new vertex $w$, where the edges incident to $w$ each correspond to an edge incident to either $u$ or $v$.}
\end{definition}

\begin{proposition}
A graph obtained by contracting an edge of a semi-arithmetic IASI graph $G$ does not admit a semi-arithmetic IASI.
\end{proposition}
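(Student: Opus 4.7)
The plan is to adapt the method used in the two preceding propositions on the line graph and the total graph. In those proofs, edge labels of $G$ are transferred to vertices of the derived graph, and the observation that a non-AP edge label of $G$ becomes a non-AP vertex label is used to rule out vertex-arithmeticity of the derived graph. The same strategy should apply for edge contraction, with the contracted edge playing the role of the ``problematic'' structure whose sum set is forced onto a vertex of the new graph.

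Concretely, let $f$ be a semi-arithmetic IASI on $G$, and let $G'$ be the graph obtained from $G$ by contracting the edge $e = uv$, so that $u$ and $v$ are merged into a single new vertex $w \in V(G')$. The natural labeling $f'$ induced on $G'$ by $f$ is defined by $f'(x) = f(x)$ for every $x \in V(G) \setminus \{u,v\}$ and $f'(w) = f(u) + f(v) = f^+(uv)$. By the definition of a semi-arithmetic IASI, the edge label $f^+(uv)$ is not an AP-set, and hence $f'(w)$ fails to be an AP-set. Therefore $f'$ is not vertex-arithmetic on $G'$, so it cannot be a semi-arithmetic IASI of $G'$.

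The only non-routine point is recognising that the labeling under consideration is the induced one carried over from $f$, in parallel with the convention implicit in the two preceding propositions on $L(G)$ and $T(G)$, where the derived graph is labeled by inheritance from the original IASI. Once this convention is made explicit, the argument is immediate, and the proof can be written in essentially one line mirroring the phrasing of the line graph and total graph results: the non-AP property of $f^+(uv)$ passes directly to the vertex label of $w$, ruling out the vertex-arithmetic condition and hence the semi-arithmetic condition on $G'$.
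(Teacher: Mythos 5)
Your proof takes essentially the same approach as the paper: the paper likewise invokes the convention that the contracted edge's set-label $f^+(uv)$ is assigned to the new vertex $w$, observes that this label is not an AP-set, and concludes that the resulting labeling is not vertex-arithmetic. Your explicit remark that the claim really concerns the \emph{induced} labeling (rather than all conceivable labelings of $G'$) is exactly the convention the paper relies on, so the two arguments coincide.
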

\begin{proof}
Let $e=uv$ be an arbitrary edge of a semi-arithmetic IASI graph $G$. Let $G'=G\circ e$. Let $w$ be the new vertex obtained by removing the edge $e$ and identifying the two vertices $u$ and $v$ to get a new vertex $w$. It is customary to assign the same set-label of $e$ to the new vertex $w$. Therefore, the set-label of $w$ is not an AP-set. Hence, $G'$ does not admit a semi-arithmetic IASI. 
\end{proof}

\begin{definition}{\rm
\cite{KDJ} Let $G$ be a connected graph and let $v$ be a vertex of $G$ with $d(v)=2$. Then, $v$ is adjacent to two vertices $u$ and $w$ in $G$. If $u$ and $v$ are non-adjacent vertices in $G$, then delete $v$ from $G$ and add the edge $uw$ to $G-\{v\}$. This operation is known as an {\em elementary topological reduction} on $G$.}
\end{definition}

\begin{proposition}
Let $G$ be a semi-arithmetic IASI graph and let $v$ be an arbitrary vertex of $G$ with $d(v)=2$ not contained in any triangle of $G$. Let $G'=(G-v)\{uw\}$, where $u$ and $w$ are adjacent vertices of $v$ in $G$. Then, $G'$ admits a semi-arithmetic IASI if and only if the deterministic indices of one of $u$ or $w$ is a positive integer multiple of the deterministic index of the other, where this integer is greater than the cardinality of the latter.
\end{proposition}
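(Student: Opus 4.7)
The plan is to isolate the one new constraint that the topological reduction places on a semi-arithmetic IASI of $G'$: the set-label of the newly added edge $uw$. Every other feature can be inherited directly. My first step is to observe that if $f$ is a semi-arithmetic IASI of $G$, then restricting $f$ to $V(G)\setminus\{v\}$ keeps every vertex label an AP-set and keeps $f^{+}(e)$ a non-AP-set for each edge $e$ of $G'$ that was already present in $G$ (these are all edges of $G$ other than $uv$ and $vw$). Since $v$ does not lie in any triangle, $uw\notin E(G)$, so it is genuinely a new edge whose label under the inherited $f$ is $f(u)+f(w)$. Hence the whole question reduces to whether $f(u)+f(w)$ is a non-AP-set.

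For the forward direction I would assume, without loss of generality, that $d_u$ and $d_w$ are the deterministic indices of $u,w$ with $d_w=k\,d_u$ and $k>|f(u)|$. Applying the contrapositive of Theorem \ref{T-AIASI-g}, the sum set $f(u)+f(w)$ fails to be an AP-set, and therefore the restriction of $f$ to $V(G)\setminus\{v\}$ is itself a semi-arithmetic IASI of $G'$, establishing one implication.

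For the converse I would assume $G'$ admits a semi-arithmetic IASI inherited from $f$ on the common vertices. Then $f^+(uw)=f(u)+f(w)$ must be a non-AP-set, so by Theorem \ref{T-AIASI-g} it cannot be the case that one of $d_u,d_w$ is a positive integer multiple of the other with multiplier at most the cardinality of the smaller-indexed set. In the first kind semi-arithmetic regime this leaves exactly the stated condition: the multiplier exists and strictly exceeds the cardinality of the latter.

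The main obstacle I expect is precisely in this converse step. Theorem \ref{T-AIASI-g} permits two distinct failure modes for $f(u)+f(w)$ to be non-AP — the deterministic indices might fail to be multiples of one another at all (the second kind situation), or they might be multiples with too large a multiplier (the first kind situation) — whereas the statement lists only the latter. I would therefore need to verify whether the proposition is implicitly restricting attention to first kind semi-arithmetic IASIs, or, if not, refine the argument to absorb the not-a-multiple case into the stated condition; this may require either a slight rewording of the conclusion or an explicit construction showing the second kind case cannot arise under the inheritance convention used here.
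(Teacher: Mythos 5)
Your approach is essentially the paper's: both arguments reduce the whole question to whether the single new edge $uw$ receives a non-AP set-label under the inherited labeling, and both then translate that into a condition on the deterministic indices of $u$ and $w$ via the characterization in Theorem \ref{T-AIASI-g}. Your explicit observation that the triangle-free hypothesis on $v$ guarantees $uw\notin E(G)$, so that $uw$ is genuinely a new edge and every old edge keeps its non-AP label, is a point the paper leaves implicit; your forward direction is complete and matches the paper's.

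The difficulty you flag in the converse is real, and you should know that the paper's own proof does not resolve it: it simply asserts that $f(u)+f(w)$ fails to be an AP-set ``only when'' one deterministic index is a positive integral multiple of the other with multiplier exceeding the cardinality of the latter. By Theorem \ref{T-AIASI-g} there is a second failure mode --- the two deterministic indices are not multiples of one another at all (the second-kind situation) --- and in that case $f(u)+f(w)$ is also non-AP, so $G'$ still admits a semi-arithmetic IASI while the stated condition fails. Hence the ``only if'' direction of the proposition, read literally for a general semi-arithmetic IASI, is false; it becomes correct either by restricting attention to first-kind semi-arithmetic IASIs or by enlarging the conclusion to include the non-multiple case. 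Your hesitation therefore identifies a genuine imprecision in the statement (shared by the paper's proof) rather than a defect peculiar to your argument; to finish, you should commit explicitly to one of the two repairs you propose. A smaller shared omission: neither you nor the paper checks that the induced edge function remains injective after the new edge $uw$ is introduced, though this is a routine verification under the standing IASI conventions.
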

\begin{proof}
Let $v$ be an arbitrary vertex of $G$ with $d(v)=2$ not contained in any triangle of $G$. Since $d(v)=2$, $v$ is adjacent to two vertices, say $u$ and $w$ in $G$. Now, remove the veretx $v$ from $G$. Then , the edges $uv$ and $vw$ will be eliminated. Join the end vertices $u$ and $w$. Let $G'=(G-v)\{uw\}$.

Since, $G$ is semi-arithmetic, $G'$ is semi-arithmetic if and only if the set-label of the edge $uw$ is not an AP-set. This is possible only when the deterministic indices of one of $u$ or $w$ is a positive integer multiple of the deterministic index of the other, where this integer is greater than the cardinality of the latter. Therefore, $G'$ is semi-arithmetic if and only if the deterministic indices of one of $u$ or $w$ is a positive integer multiple of the deterministic index of the other, where this integer is greater than the cardinality of the latter.
\end{proof}

\begin{definition}{\rm
\cite{RJT} A {\em subdivision} of a graph $G$ is the graph obtained by adding vertices of degree two into its edges.}
\end{definition}

\begin{proposition}
A subdivision of a semi-arithmetic graph $G$ does not admit a semi-arithmetic IASI.
\end{proposition}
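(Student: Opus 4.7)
The plan is to mirror the arguments used for the line graph, total graph, and edge-contraction propositions above: locate in the subdivision $G'$ a vertex that is forced, under the customary label-inheritance from $G$, to carry the set-label of an edge of $G$, and then invoke the definition of a semi-arithmetic IASI to deduce that this label cannot be an AP-set.

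First I would fix a semi-arithmetic IASI $f$ of $G$ and an arbitrary edge $e=uv$ of $G$ that is to be subdivided. Subdividing $e$ replaces the edge $uv$ with a path $u\,x\,v$, where $x$ is a newly introduced vertex of degree two in $G'$. Following the convention consistently used in the preceding propositions (made most explicit in the edge-contraction proof, where the set-label of the contracted edge is transferred to the merged vertex), I would assign to $x$ the set-label $f(u)+f(v)=f^+(uv)$, since this is the only label on $G$ naturally associated with the suppressed edge $e$.

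Next, by the defining property of a semi-arithmetic IASI, the set-labels of the edges of $G$ are not AP-sets; in particular $f^+(uv)$ is not an AP-set. Hence the new vertex $x\in V(G')$ acquires a set-label that is not an AP-set, so the induced labeling on $G'$ already fails the vertex-arithmetic requirement and therefore cannot be semi-arithmetic. Since the same situation arises at every subdivision vertex, $G'$ does not admit a semi-arithmetic IASI.

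The main obstacle is interpretive rather than technical: in principle one could ask whether $G'$ might admit some semi-arithmetic IASI entirely unrelated to $f$, and the above argument only rules out the inherited labeling. However, the preceding propositions on $L(G)$, $T(G)$, and edge contraction are written under the same label-inheritance convention, so reading the present statement in that spirit completes the proof. I would therefore phrase the proof in exactly the same brief style as those propositions, with no further machinery needed.
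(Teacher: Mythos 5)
Your proposal is essentially identical to the paper's own proof: subdivide $e=uv$ by a new vertex, transfer the edge's set-label $f^+(uv)$ to that vertex by the stated convention, and observe that this label is not an AP-set, so the vertex-arithmetic requirement fails. The interpretive caveat you raise (that only the inherited labeling is ruled out) applies equally to the paper's argument, which relies on the same convention without further justification.
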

\begin{proof}
Let $e=uv$ be an arbitrary edge of a semi-arithmetic IASI graph $G$. Let $G'$ be a subdivision of $G$ obtained by a new vertex $w$ to the edge $e$. Therefore, the edge $uv$ will be replaced by the two edges $uw$ and $vw$ in $G$. It is customary to assign the same set-label of $uv$ to the new vertex $w$. Therefore, the set-label of $w$ is not an AP-set. Hence, $G'$ does not admit a semi-arithmetic IASI. 
\end{proof}

\section{Conclusion}

In this paper, we have discussed some characteristics of graphs which admit a certain type of IASI called semi-arithmetic IASI. We have formulated some conditions for some graph classes to admit semi-arithmetic IASIs and we have discussed about some characteristics of semi-arithmetic IASI graphs. Certain problems in this area are still open. 

The IASIs under which the vertices of a given graph are labeled by different standard sequences of non negative integers, are also worth studying.   The problems of establishing the necessary and sufficient conditions for various graphs and graph classes to have certain IASIs still remain unsettled. All these facts highlight a wide scope for further studies in this area.

\end{document}